\documentclass[11pt,a4paper]{article}

\usepackage[utf8]{inputenc}
\usepackage[T1]{fontenc}
\usepackage{geometry}
\usepackage{amsmath,amssymb,amsthm}
\usepackage{hyperref}
\usepackage{listings}
\usepackage{xcolor}
\usepackage{enumitem}
\usepackage{booktabs}
\usepackage{array}

\usepackage{tikz}
\usepackage{pgf} 
\usetikzlibrary{tikzmark}
\usetikzlibrary{positioning}
\usetikzlibrary{patterns,arrows,decorations.pathreplacing}

\usepackage{biblatex}
\newtheorem{theorem}{Theorem}[section]

\newtheorem{lemma}[theorem]{Lemma}

\theoremstyle{definition}
\newtheorem{definition}[theorem]{Definition}
\newtheorem{example}[theorem]{Example}
\newtheorem{algorithm_env}[theorem]{Algorithm}
\newtheorem{remark}[theorem]{Remark}
\newtheorem{prop}[theorem]{Proposition}

\definecolor{codegray}{RGB}{248,248,248}
\definecolor{codeblue}{RGB}{0,60,140}
\definecolor{codegreen}{RGB}{0,110,0}
\definecolor{codered}{RGB}{140,0,0}

\lstdefinestyle{pseudocode}{
  backgroundcolor=\color{codegray},
  basicstyle=\ttfamily\small,
  keywordstyle=\color{codeblue}\bfseries,
  commentstyle=\color{codegreen}\itshape,
  frame=single,
  rulecolor=\color{gray!40},
  breaklines=true,
  showstringspaces=false,
  numbers=left,
  numberstyle=\tiny\color{gray},
  numbersep=8pt,
  tabsize=4,
  morekeywords={for, if, return, while, do, function, end, Input, Output, else},
}

\title{%
  Effective Bialynicki-Birula-Brosnan motivic decompositions
}

\author{Charles De Clercq\footnote{Université Sorbonne Paris Nord, LAGA, declercq@math.univ-paris13.fr}}

\date{}

\begin{document}

\maketitle

\begin{abstract}
Let $G$ be an isotropic reductive group and $X$ be a projective $G$-homogeneous variety. Using results from Bialynicki-Birula, Hesselink and Iversen, Brosnan showed that if $G$ is of inner type, the motive of $X$ can be expressed as a direct sum of Tate twists of motives of projective homogeneous varieties for the anisotropic kernel of $G$. We provide a SageMath implementation of this decomposition, based on a depth-first search of the Cayley graph of the Weyl group of $G$, ensuring the complexity scales with the size of the motive rather than the full Weyl group. As applications, we provide new motivic decompositions for some exceptional groups and show how to extend Karpenko's decompositions for classical groups to characteristic $2$.
\end{abstract}

\tableofcontents

\section{Introduction}

Rost decomposition of the motive of a projective quadric states that given an isotropic smooth projective $n$-dimensional quadric $Q$ over a field, its Chow motive $M(Q)$ admits a direct sum decomposition involving a shift of the motive of the anisotropic quadric associated to $Q$, and some Tate motives. Moreover, the shifts appearing in this decomposition are completely determined by the Witt index of the underlying quadratic form. The question of extending this kind of motivic decompositions to an arbitrary projective homogeneous variety for isotropic semi-simple groups, involving motives of projective homogeneous varieties with respect to anisotropic kernels, has been intensively studied in the last decades.

The first systematic results were obtained by Karpenko \cite{kar}, who provided explicit motivic decompositions for projective homogeneous varieties for classical groups, involving the underlying algebraic structures. A fundamental step was then achieved by Chernousov, Gille and Merkurjev, who constructed motivic decompositions for projective homogeneous varieties with a rational point \cite{chergilmer}. Among many other applications, this key result implies notably that projective homogeneous varieties satisfy Rost nilpotence.

Finally, using results of Bialynicki-Birula, Hesselink and Iversen, Brosnan provides with \cite[Theorem 7.4]{bros} explicit motivic decompositions for arbitrary projective homogeneous varieties for isotropic semi-simple groups. Given such a group $G$ and a projective $G$-homogeneous variety $X$, it gives a decomposition $$M(X)\simeq\bigoplus_{w\in \mathcal{W}} M(X_w)\{\alpha_w\}$$ of the motive of $X$, where $\mathcal{W}$ is a subset of the Weyl group of $G$ and the $X_w$'s are projective homogeneous varieties for the anisotropic kernel of $G$. Furthermore, the types of these varieties and the shifts involved in this decomposition are completely determined by the \emph{type} of the $G$-variety $X$, the \emph{anisotropic kernel} of $G$ and its \emph{Weyl group}.

However explicit, these Bialynicki-Birula-Brosnan motivic decompositions can be intractable by hand for huge Weyl groups (notably for semi-simple groups of exceptional types). This note aims at providing an efficient algorithm in SageMath performing these decompositions. The main computational solution consists of substituting to the naive approach of performing a bi-minimal reduction on all the elements of the Weyl group $W$ of $G$ an efficient algorithm. The key algorithmic improvement consists of an enumeration of right-minimal representatives through depth-first search (DFS) on the Cayley graph of $W$, followed by filtering a set of representatives for the additional left-minimality condition. In the second part we introduce the needed material to state Brosnan Theorem: semi-simple algebraic groups over fields and the associated combinatorial data, Tits indices, projective homogeneous varieties and motives. We then describe in sections 3 and 4 the main algorithm, with a complexity analysis. The last two parts are dedicated to the framework required to use our algorithm, and to provide some explicit examples of motivic decompositions it provides.

\section{Background on semi-simple algebraic groups and motives}
\label{sec:background}

\subsection{Semi-simple algebraic groups and projective homogeneous varieties}

Let $F$ be a field and $G$ be a semi-simple algebraic group over $F$ (we refer to \cite{kmrt} and \cite{milne} for general information about semi-simple algebraic groups). Denote by $F_{sep}$ a separable closure of $F$. Given a maximal torus $T\subset G$, denote by $\Phi(G)$ the root system of $G_{F_{sep}}$ with respect to $T_{F_{sep}}$ and by $\Delta_G$ the associated Dynkin diagram (obtained fixing a Borel subgroup of $G_{F_{sep}}$ containing $T_{F_{sep}}$). In the sequel, we often use $\Delta_G$ as well to denote the set of vertices of the Dynkin diagram of $G$, which correspond to simple roots $\Sigma$.

In this work, we only consider semi-simple algebraic groups of inner type, that is, the absolute Galois group $\mathrm{Gal}(F_{sep}/F)$ of $F$ acts trivially on the Dynkin diagram of $G$ through the $\ast$-action \cite[§27.A]{kmrt}. Although Bialynicki-Birula-Brosnan motivic decompositions hold in outer type as well, this case involves corestrictions over various extensions of $F$; encoding such data in an algorithmic input or output seems impractical.

A projective $G$-homogeneous variety $X$ is a $G$-variety, which is isomorphic over a separable closure of $F$ to a quotient of $G$ by a parabolic subgroup. As explained in \cite{boreltits} or \cite{indexred}, there is a bijection between the subsets of the set of vertices of the Dynkin diagram of $G$, and the isomorphism classes of projective $G$-homogeneous varieties. We will say that a projective $G$-homogeneous variety associated to a subset $\Theta$ of $\Delta_G$ is of type $\Theta$, and often write it $X_{\Theta,G}$.

\begin{remark}\label{convent}
There are two opposite conventions for the type of the projective homogeneous variety (see \cite{motequiv} and \cite{tits}). For clarity, we stick for now and until the last section with Tits convention used by Brosnan, so that the subset associated with the Borel variety of $G$ is the empty set. As explained later, our algorithm allows inputs with both conventions.
\end{remark}

Since $G$ is of inner type, the Galois action on its Dynkin diagram is trivial. Consequently, $G$ is isomorphic to a direct product $\prod_{i=1}^n G_i$ of absolutely simple groups, whose Dynkin diagrams are the connected components of $\Delta_G$. Any projective homogeneous variety $X_{\Theta,G}$ is therefore isomorphic to a product $X_{\Theta,G}\simeq \prod_{i=1}^n X_{\Theta_i,G_i}$, where the $\Theta_i$'s are the respective subsets of the Dynkin diagrams $\Delta_{G_1}$,...,$\Delta_{G_n}$ which cover $\Theta$. As for splitting off the motive of a projective homogeneous variety, in our case, it is enough to work with absolutely simple algebraic groups.\\\vspace{0.3cm}

\begin{table}[bth]
{\centering\noindent\makebox[492pt]{
\scalebox{1}{
\begin{tabular}[c]{p{2.2in}|p{3in}}

$\small{(A_n)~~}$
\begin{picture}(7,2)(0,0)
\put(0,1){\circle*{3}}
\put(0,1){\line(1,0){20}}
\put(20,1){\circle*{3}}
\put(20,1){\line(1,0){20}}
\put(40,1){\circle*{3}}
\put(40,-1.6){\hspace{6pt}\mbox{$\cdots$}}
\put(62,1){\circle*{3}}
\put(62,1){\line(1,0){20}}
\put(82,1){\circle*{3}}
\put(82,1){\line(1,0){20}}
\put(102,1){\circle*{3}}

\put(-2,-7){\mbox{\tiny $1$}}
\put(18,-7){\mbox{\tiny $2$}}
\put(38,-7){\mbox{\tiny $3$}}
\put(54,-7){\mbox{\tiny $n$$-$$2$}}
\put(75,-7){\mbox{\tiny $n$$-$$1$}}
\put(100,-7){\mbox{\tiny $n$}}
\end{picture}
\vspace{1.5cm}

&

$\small{(E_6)~~}$
\begin{picture}(7,2)(0,0)
\put(0,-5){\circle*{3}}
\put(0,-5){\line(1,0){15}}
\put(15,-5){\circle*{3}}
\put(15,-5){\line(1,0){15}}
\put(30,-5){\circle*{3}}
\put(30,10){\circle*{3}}
\put(30,-5){\line(0,1){15}}
\put(30,-5){\line(1,0){15}}
\put(45,-5){\circle*{3}}
\put(45,-5){\line(1,0){15}}
\put(60,-5){\circle*{3}}

\put(-2,-13){\mbox{\tiny $1$}}
\put(13,-13){\mbox{\tiny $3$}}
\put(28,-13){\mbox{\tiny $4$}}
\put(43,-13){\mbox{\tiny $5$}}
\put(58.5,-13){\mbox{\tiny $6$}}
\put(33,9){\mbox{\tiny $2$}}
\end{picture}
\vspace{0.7cm}
\\
$\small{(B_n)~~}$
\begin{picture}(7,2)(0,0)
\put(0,1){\circle*{3}}
\put(0,1){\line(1,0){20}}
\put(20,1){\circle*{3}}
\put(20,1){\line(1,0){20}}
\put(40,1){\circle*{3}}
\put(40,-1.6){\hspace{6pt}\mbox{$\cdots$}}
\put(62,1){\circle*{3}}
\put(62,1){\line(1,0){20}}
\put(82,1){\circle*{3}}
\put(82,2){\line(1,0){20}}
\put(82,0){\line(1,0){20}}
\put(89,-1){{\small\mbox{$>$}}}
\put(102,1){\circle*{3}}

\put(-2,-7){\mbox{\tiny $1$}}
\put(18,-7){\mbox{\tiny $2$}}
\put(38,-7){\mbox{\tiny $3$}}
\put(54,-7){\mbox{\tiny $n$$-$$2$}}
\put(75,-7){\mbox{\tiny $n$$-$$1$}}
\put(100,-7){\mbox{\tiny $n$}}
\end{picture}

&

$\small{(E_7)~~}$
\begin{picture}(7,2)(0,0)
\put(0,-5){\circle*{3}}
\put(0,-5){\line(1,0){15}}
\put(15,-5){\circle*{3}}
\put(15,-5){\line(1,0){15}}
\put(30,-5){\circle*{3}}
\put(30,-5){\line(1,0){15}}
\put(45,-5){\circle*{3}}
\put(45,-5){\line(1,0){15}}
\put(45,10){\circle*{3}}
\put(45,-5){\line(0,1){15}}
\put(60,-5){\circle*{3}}
\put(60,-5){\line(1,0){15}}
\put(75,-5){\circle*{3}}

\put(-2,-13){\mbox{\tiny $7$}}
\put(13,-13){\mbox{\tiny $6$}}
\put(28,-13){\mbox{\tiny $5$}}
\put(43,-13){\mbox{\tiny $4$}}
\put(58.5,-13){\mbox{\tiny $3$}}
\put(73,-13){\mbox{\tiny $1$}}
\put(47.5,9){\mbox{\tiny $2$}}
\end{picture}
\vspace{0.7cm}
\\
$\small{(C_n)~~}$
\begin{picture}(7,2)(0,0)
\put(0,1){\circle*{3}}
\put(0,1){\line(1,0){20}}
\put(20,1){\circle*{3}}
\put(20,1){\line(1,0){20}}
\put(40,1){\circle*{3}}
\put(40,-1.6){\hspace{6pt}\mbox{$\cdots$}}
\put(62,1){\circle*{3}}
\put(62,1){\line(1,0){20}}
\put(82,1){\circle*{3}}
\put(82,2){\line(1,0){20}}
\put(82,0){\line(1,0){20}}
\put(89,-1){{\small\mbox{$<$}}}
\put(102,1){\circle*{3}}

\put(-2,-7){\mbox{\tiny $1$}}
\put(18,-7){\mbox{\tiny $2$}}
\put(38,-7){\mbox{\tiny $3$}}
\put(54,-7){\mbox{\tiny $n$$-$$2$}}
\put(75,-7){\mbox{\tiny $n$$-$$1$}}
\put(100,-7){\mbox{\tiny $n$}}
\end{picture}
\vspace{0.5cm}

&

$\small{(E_8)~~}$
\begin{picture}(7,2)(0,0)
\put(0,-5){\circle*{3}}
\put(0,-5){\line(1,0){15}}
\put(15,-5){\circle*{3}}
\put(15,-5){\line(1,0){15}}
\put(30,-5){\circle*{3}}
\put(30,-5){\line(1,0){15}}
\put(45,-5){\circle*{3}}
\put(60,-5){\line(0,1){15}}
\put(60,-5){\circle*{3}}
\put(60,10){\circle*{3}}
\put(75,-5){\circle*{3}}
\put(75,-5){\line(1,0){15}}
\put(90,-5){\circle*{3}}
\put(45,-5){\line(1,0){15}}
\put(60,-5){\line(1,0){15}}

\put(-2,-13){\mbox{\tiny $8$}}
\put(13,-13){\mbox{\tiny $7$}}
\put(28,-13){\mbox{\tiny $6$}}
\put(43,-13){\mbox{\tiny $5$}}
\put(58.5,-13){\mbox{\tiny $4$}}
\put(73,-13){\mbox{\tiny $3$}}
\put(88,-13){\mbox{\tiny $1$}}
\put(62.5,9){\mbox{\tiny $2$}}
\end{picture}
\vspace{0.7cm}
\\

$\small{(D_n)~~}$
\begin{picture}(7,2)(0,0)
\put(0,1){\circle*{3}}
\put(0,1){\line(1,0){20}}
\put(20,1){\circle*{3}}
\put(20,1){\line(1,0){20}}
\put(40,1){\circle*{3}}
\put(40,-1.6){\hspace{6pt}\mbox{$\cdots$}}
\put(62,1){\circle*{3}}
\put(62,1){\line(1,0){20}}
\put(82,1){\circle*{3}}
\put(82,2){\line(4,3){15}}
\put(82,0){\line(4,-3){15}}
\put(96.5,12.9){\circle*{3}}
\put(96.5,-10.9){\circle*{3}}

\put(-2,-7){\mbox{\tiny $1$}}
\put(18,-7){\mbox{\tiny $2$}}
\put(38,-7){\mbox{\tiny $3$}}
\put(54,-7){\mbox{\tiny $n$$-$$3$}}
\put(86,-0.5){\mbox{\tiny $n$$-$$2$}}
\put(100,-12){\mbox{\tiny $n$}}
\put(100,11.8){\mbox{\tiny $n$$-$$1$}}
\end{picture}

&

$\small{(F_4)~~}$
\begin{picture}(7,2)(0,0)
\put(0,1){\circle*{3}}
\put(0,1){\line(1,0){15}}
\put(15,1){\circle*{3}}
\put(15,0){\line(1,0){15}}
\put(15,2){\line(1,0){15}}
\put(19,-1){{\small\mbox{$>$}}}
\put(30,1){\circle*{3}}
\put(30,1){\line(1,0){15}}
\put(45,1){\circle*{3}}

\put(-2,-7){\mbox{\tiny $1$}}
\put(13,-7){\mbox{\tiny $2$}}
\put(28,-7){\mbox{\tiny $3$}}
\put(43,-7){\mbox{\tiny $4$}}

\put(65,1){$\small{(G_2)~~}$}
\put(92,1){\circle*{3}}
\put(92,0.1){\line(1,0){15}}
\put(92,1.1){\line(1,0){15}}
\put(92,2.1){\line(1,0){15}}
\put(96,-1){{\small\mbox{$<$}}}
\put(107,1){\circle*{3}}

\put(90,-7){\mbox{\tiny $1$}}
\put(105,-7){\mbox{\tiny $2$}}
\end{picture}
\vspace{0.5cm}
\end{tabular}
}}}
\caption{Connected Dynkin diagrams, with simple roots numbered} \label{dynk.table}
\end{table}

\subsection{Anisotropic kernels and Tits indexes}

Let $G$ be a semi-simple algebraic group over $F$. The \emph{rank} of $G$ is the dimension of a maximal split torus. A semi-simple algebraic group is isotropic if it contains a non-trivial split torus, and anisotropic otherwise.

\begin{definition}
    Let $G$ be a semi-simple group and $T_0\subset G$ be a maximal split torus. The derived group of the centralizer of $T_0$ in $G$ is the anisotropic kernel of $G$, denoted by $G_{an}$. 
\end{definition}

The anisotropic kernel of $G$ is an anisotropic semi-simple algebraic group, which does not depend on the choice of a maximal split torus \cite[Prop. 1.9]{kerreh}.

The Tits index is a fundamental invariant used in the classification of semi-simple groups to encode their isotropy. Given a semi-simple algebraic group $G$, it consists of the Dynkin diagram $\Delta_G$, endowed with a fixed subset of vertices $\Theta_0$ called distinguished vertices. A vertex $i \in \Delta_G$ belongs to $\Theta_0$ if the associated simple root is orthogonal to a maximal torus. Note that following our convention, projective homogeneous varieties of type $\Theta$ have a rational point if and only if $\Theta_0\subset \Theta$ (see \cite{tits} for the complete list of the Tits indices over fields).

Note that if $G_{an}$ is the anisotropic kernel of $G$, its Dynkin diagram $\Delta_{G_{an}}$ is obtained from $\Delta_G$ by removing the vertices which are not distinguished. The Tits index of $G$ is denoted by $\mathrm{Tits}(G)$ or $(\Delta_G,\Theta_0)$ depending on the context, and is represented by the Dynkin diagram of $G$, on which the vertices not contained in $\Theta_0$ are circled.

\begin{example}
    Let $n$ be an integer and $q$ be a non-degenerate $2n$-dimensional quadratic form over $F$, with trivial discriminant. Assuming $q$ is of Witt index $k$, denote by $q_{an}$ its anisotropic part and write $q=k\mathbb{H}~\bot~ q_{an}$. The orthogonal group $SO(q)$ is of type $D_n$ and its anisotropic kernel is isomorphic to the special orthogonal group of $q_{an}$. The distinguished vertices of $\Delta_{SO(q)}$ are $k+1,k+2,...,n$ so that $\mathrm{Tits}(SO(q))$ is represented as follows.  \\

{\centering\noindent\makebox[180pt]{

{

\begin{tabular}{c}

\begin{picture}(7,2)(0,0)

%ligne du dessus

\put(0,0){\circle*{3}}
\put(0,0){\line(1,0){20}}
\put(20,0){\circle*{3}}
 \put(20,0){\line(1,0){20}}
% \put(25,-2.5){\mbox{$\cdots$}}

\put(40,0){\circle*{3}}
\put(40,0){\line(1,0){20}}
\put(60,0){\circle*{3}}
\put(60,0){\line(1,0){7}}
\put(68.5,-2.7){\mbox{$\cdots$}}
\put(90,0){\circle*{3}}
\put(83,0){\line(1,0){92}}
\put(110,0){\circle*{3}}
\put(130,0){\circle*{3}}
\put(150,0){\circle*{3}}
\put(170,0){\circle*{3}}
\put(176.5,-2.7){\mbox{$\cdots$}}
\put(196,0){\circle*{3}}
\put(216,0){\circle*{3}}

\put(191,0){\line(1,0){65}}
\put(236,0){\circle*{3}}
\put(256,0){\circle*{3}}
\put(273,12.9){\circle*{3}}
\put(273,-12.9){\circle*{3}}

\put(256,0){\line(4,3){17}}
\put(256,0){\line(4,-3){17}}

%$p$-indices
\put(0,0){\circle{7}}
\put(20,0){\circle{7}}
\put(40,0){\circle{7}}
\put(60,0){\circle{7}}
\put(90,0){\circle{7}}
\put(110,0){\circle{7}}

%texte
\put(-2,6){{\scriptsize 1}}
\put(18,6){{\scriptsize 2}}
\put(38,6){{\scriptsize 3}}
\put(58,6){{\scriptsize 4}}
\put(84,6){{\scriptsize k-1}}
\put(107.5,6){{\scriptsize k}}

\end{picture}

 \begin{tikzpicture}[remember picture, overlay]

\draw [decorate,decoration={brace,amplitude=4pt,raise=1pt,mirror},xshift=162pt,yshift=20pt](-50pt,-37pt) -- (108pt,-37pt) node[black,midway,yshift=-0.4cm] {\footnotesize $SO(q_{an})$};

\end{tikzpicture}
\end{tabular}}}
}

\vspace{1.3cm}
\end{example}

\subsection{Weyl groups, double cosets and minimal coset representatives}

Denote by $W$ the Weyl group corresponding to a maximal torus $T$ in $G$. It is generated by the reflections $s_{\alpha}$ associated with the simple roots $\alpha\in\Sigma$. Given a subset $\Theta$ of $\Sigma$, we denote by $R_{\Theta}$ the set of roots generated by $\Theta$ and by $W_{\Theta}$ the subgroup generated by the reflections $s_{\alpha}$ with $\alpha\in \Theta$. It is the Weyl group of the root system generated by $\Theta$.

\begin{remark}
   Given a semi-simple group $G$, the simple roots attached to the subset $\Theta_0$ of distinguished vertices in $\Delta_G$ is a set of simple roots for the anisotropic kernel of $G$. 
\end{remark}
The length $\ell(w)$ of an element $w\in W$ is the minimal number of elements in an expression $w=s_{\alpha_{1}}...s_{\alpha_{k}}$ of $w$, as a product of reflections with respect to the simple roots.

\begin{prop}[{\cite[Ex. 7.1]{bros},\cite{humph}}]\label{test}
    Let $G$ be a semi-simple group. Denoting $W$ the Weyl group of $G$ and $\Sigma$ a system of simple roots for $G$, the following holds :
    \begin{enumerate}
        \item Any double coset in $W_{I}\textbackslash W/W_{J}$ given by two subsets $I,J\subset \Sigma$ contains a unique element of minimal length (with respect to $\Sigma$).
        \item A $w$ in such double coset is minimal if it satisfies one of the following equivalent conditions: 
        \begin{enumerate}
            \item $\ell(ws_{\alpha})=\ell(w)+1$ for $\alpha\in J$ and $\ell(s_{\alpha}w)=\ell(w)+1$ for $\alpha\in I$.

            \item $w\alpha>0$ for $\alpha\in J$ and $w^{-1}\alpha>0$ for $\alpha \in I$.
        \end{enumerate}
    \end{enumerate}

    Picking minimal length elements for each double coset yields the canonical set of minimal length coset representatives for $W_{I}\textbackslash W/W_{J}$.
\end{prop}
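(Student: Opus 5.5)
We will prove the statement with standard Coxeter-group tools: the exchange condition for $(W,\Sigma)$, and the identity $\ell(w)=\#\{\beta>0 : w\beta<0\}$. We first establish the equivalence of (a) and (b). The basic fact is that for a simple root $\alpha$ we have $\ell(ws_\alpha)=\ell(w)+1$ if and only if $w\alpha>0$, and $\ell(ws_\alpha)=\ell(w)-1$ otherwise. This follows from the length identity: $s_\alpha$ permutes the positive roots other than $\alpha$ and sends $\alpha$ to $-\alpha$, so the inversion sets of $w$ and $ws_\alpha$ differ exactly by $\pm\alpha$. For the left condition we apply this to $w^{-1}$, using $\ell(s_\alpha w)=\ell(w^{-1}s_\alpha)$ and $\ell(w^{-1})=\ell(w)$. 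This proves (a)$\Leftrightarrow$(b).

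Next we treat one-sided cosets. For $J\subset\Sigma$ and $w\in W$, we claim $w$ factors uniquely as $w=w^Jw_J$. Here $w_J\in W_J$, $w^J$ satisfies $w^J\alpha>0$ for all $\alpha\in J$, and $\ell(w)=\ell(w^J)+\ell(w_J)$. For existence, we choose $w^J$ of minimal length in $wW_J$. By the basic fact, $w^J$ sends $J$, and hence every positive root of $R_J$, to positive roots. Then for $u\in W_J$, the inversion set of $w^Ju$ is the disjoint union of $u^{-1}$ applied to the inversions of $w^J$ (none of which lie in $R_J$) and the inversions of $u$ (which lie in $R_J$). Counting gives additivity of lengths, so $w^J$ is the unique minimal element of $wW_J$. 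It also shows that $w^J$ is characterized by $w^J(J)>0$.

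For double cosets, we take $w$ of minimal length in $W_IwW_J$. By the previous step applied on both sides, $w$ satisfies (b); this gives existence of a minimal element satisfying the conditions. The main obstacle is uniqueness, together with the converse: any $w$ satisfying (b) is the minimum. Our approach is to show that if $w$ satisfies (b), then every element of $W_IwW_J$ can be written $uwv$ with $u\in W_I$, $v\in W_J$ and $\ell(uwv)=\ell(u)+\ell(w)+\ell(v)$. To do this, we write $x=uwv'$ and first reduce $v'$ modulo $W_{J\cap w^{-1}Iw}$. Kilmoyer's lemma, $W_I\cap wW_Jw^{-1}=W_{I\cap wJw^{-1}}$ for $w$ minimal, lets us move that part of $v'$ to the left, where it becomes an element of $W_I$. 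We then take $u$ minimal in its coset $uW_{I\cap wJ}$ and count inversions as in the one-sided case to get length additivity. Kilmoyer's lemma itself is proved via roots: an element of $W_J$ conjugated by $w$ into $W_I$ maps $R_J$ into $R_I$ compatibly with positivity, because $w(J)>0$ and $w^{-1}(I)>0$. Length additivity then yields $\ell(x)\ge\ell(w)$, with equality only when $u=v=1$. This gives uniqueness of the minimal element and shows that (b) characterizes it.

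Finally, picking this unique element in each double coset gives a well-defined set of representatives, which is the canonical set in the statement. Alternatively, since the proposition is quoted from \cite{humph} and \cite[Ex. 7.1]{bros}, one could cite the standard treatment of parabolic double cosets, for instance Carter's or Bourbaki's. The plan above follows that standard treatment.
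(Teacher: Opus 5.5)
The paper itself does not prove this proposition: it is quoted from \cite[Ex.~7.1]{bros} and \cite{humph} and used as a black box to justify the two left-minimality tests. Your sketch is the standard self-contained argument, as in Carter or Geck--Pfeiffer. The descent criterion $\ell(ws_\alpha)=\ell(w)+1\iff w\alpha>0$ gives (a)$\Leftrightarrow$(b), and the inversion count gives the one-sided factorisation with additive lengths. A normal form $x=uwv$ with $\ell(x)=\ell(u)+\ell(w)+\ell(v)$, valid for \emph{any} $w$ satisfying (b), then yields uniqueness and the converse at once. Compared with the citation, this makes explicit that only condition (b) is used, which is exactly what the root-based test of the algorithm checks. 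A minor point: the sets $\{\beta>0 : w\beta<0\}$ for $w$ and $ws_\alpha$ are related by applying $s_\alpha$ and then adding or removing $\alpha$. It is the sets $\Phi^+\cap w\Phi^-$ that differ by $\pm w\alpha$. The count is unaffected.

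The double-coset step, however, does not go through as written, because Kilmoyer's lemma is invoked in the wrong place.

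Moving the $W_{J\cap w^{-1}I}$-part of $v'$ to the left needs nothing, since $w\,W_{J\cap w^{-1}I}\,w^{-1}=W_{I\cap wJ}\subseteq W_I$ by conjugating generators. Moreover, only one of your two reductions is needed, since the second undoes the first. Take $u$ minimal in $uW_{I\cap wJ}$, push the remainder through $w$ into $W_J$, and leave $v\in W_J$ arbitrary.

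The additivity is then \emph{not} ``as in the one-sided case''. That case gives $\ell(wv)=\ell(w)+\ell(v)$. For $\ell(u\cdot wv)=\ell(u)+\ell(wv)$ you must rule out a $\gamma\in R_I^+$ with $u\gamma<0$ and $v^{-1}w^{-1}\gamma<0$, while $u$ is only known to be positive on $R^+_{I\cap wJ}$. Such a $\gamma$ has $w^{-1}\gamma>0$ because $w^{-1}(I)>0$. So $w^{-1}\gamma$ is an inversion of $v^{-1}\in W_J$ and lies in $R_J^+$; hence $\gamma\in R_I\cap wR_J$.

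The missing input is the root form of Kilmoyer's lemma: $R_I\cap wR_J=R_{I\cap wJ}$ for $w$ satisfying (b). State it this way, not ``for $w$ minimal'', or the converse becomes circular. Its proof replaces your vague sentence:
\begin{enumerate}
\item Write $\gamma=\sum_{\beta\in J}d_\beta\,w\beta$ with $d_\beta\ge 0$ and all $w\beta>0$.
\item Comparing supports gives $w\beta\in R_I^+$ whenever $d_\beta>0$.
\item Then $\beta=\sum_{\alpha\in I}e_\alpha\,w^{-1}\alpha$ with $e_\alpha\ge 0$ and all $w^{-1}\alpha>0$. Since $\mathrm{ht}(\beta)=1$, this forces $w\beta\in I$, hence $\gamma\in R_{I\cap wJ}$.
\end{enumerate}
With this inserted you get $u\gamma>0$, a contradiction, and your argument is complete.
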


\subsection{Motives and Bialynicki-Birula-Brosnan decompositions}

\subsubsection{Statement of Brosnan Theorem}

Our main reference with regards to construction and basic properties of Chow groups and motives is \cite{ekm}. In this note, we work over the category $\mathrm{CM}(F,\mathbb{Z})$ of Chow motives with integral coefficients, constructed from the category of correspondences by additive and pseudo-abelian completion. Following standard notation, we denote by $\mathbb{Z}\{i\}$ with $i\in \mathbb{Z}$ the Tate motives, that is, the Tate twists of the motive of the base field. Unfolding Brosnan's result for groups of inner type reads as follows.

\begin{theorem}[\!\!{\cite[Theorem 7.4]{bros}}]\label{main}
  Let $G$ be a semi-simple algebraic group of inner type and $\Theta\subset \Delta_G$ be a subset of the vertices of its Dynkin diagram. Denote by $(\Delta_G,\Theta_0)$ its Tits index and by $\mathcal{W}$ the set of minimal length coset representatives for $W_{\Theta_0}\textbackslash W/W_{\Theta}$. Then we have
  $$M(X_{\Theta,G})\simeq \bigoplus_{w\in \mathcal{W}}M(X_{\Theta_w,G_{an}})\{\ell(w)\}$$
  in $\mathrm{CM}(F,\mathbb{Z})$, where for $w\in \mathcal{W}$, $\Theta_w=\{\alpha\in \Theta_0, w^{-1}\alpha\in R_{\Theta}\}$.
\end{theorem}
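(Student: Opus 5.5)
The plan follows Brosnan's strategy: realize $X=X_{\Theta,G}$ with a $\mathbb{G}_m$-action whose fixed locus is a disjoint union of projective homogeneous varieties for the anisotropic kernel, and then apply the Bialynicki-Birula decomposition on the level of motives.

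\textbf{Step 1: the torus action and its fixed locus.} Choose a maximal split torus $T_0\subset G$. Its centralizer $L=C_G(T_0)$ is a Levi subgroup of a minimal parabolic $P_0$ defined over $F$, and $[L,L]=G_{an}$. Pick a generic cocharacter $\lambda:\mathbb{G}_m\to T_0$, i.e.\ one that is nonzero on every root of $G$ relative to $T_0$ not lying in $R_{\Theta_0}$. Then $X^{\mathbb{G}_m}=X^{T_0}$.

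\textbf{Step 2: identifying the components.} Over $F_{sep}$ the fixed points of $T_0$ on $G/P_\Theta$ are the $L$-orbits $L\,wP_\Theta/P_\Theta$. Two such orbits coincide exactly when the corresponding elements lie in the same double coset $W_{\Theta_0}\backslash W/W_{\Theta}$. Since $G$ is of inner type, the Galois group acts trivially on these double cosets, so every component is defined over $F$. Take $w\in\mathcal{W}$ minimal in its double coset. The stabilizer in $L$ of $wP_\Theta$ is $L\cap wP_\Theta w^{-1}$, a parabolic subgroup of $L$. Its type is read off from the simple roots $\alpha\in\Theta_0$ with $w^{-1}\alpha\in R_\Theta$; minimality (Proposition~\ref{test}(2b)) guarantees that $w^{-1}$ sends $\Theta_0$ into positive roots, so these roots are exactly the Levi roots of the stabilizer. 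Hence the component $Z_w$ is a projective $G_{an}$-homogeneous variety of type $\Theta_w$ (in Tits convention), i.e.\ $Z_w\simeq X_{\Theta_w,G_{an}}$. That it is genuinely twisted by $G_{an}$, rather than merely a form over $F_{sep}$, follows because the $L$-orbit of an $F$-point of the fixed component carries the $L$-action defined over $F$.

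\textbf{Step 3: motivic decomposition.} Apply the Bialynicki-Birula decomposition, in the form due to Hesselink and Iversen for arbitrary fields, to the smooth projective $\mathbb{G}_m$-variety $X$. It yields a filtrable decomposition into affine bundles $X_w^+\to Z_w$ whose rank is the dimension of the positive part of the tangent space along $Z_w$. Brosnan's theorem, that the motive of a variety with such a cellular fibration splits, then gives
$$M(X)\simeq\bigoplus_w M(Z_w)\{a_w\}.$$

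\textbf{Step 4: computing the twist.} It remains to show $a_w=\ell(w)$. The positive weight space of $\lambda$ on $T_{wP_\Theta}X=\mathfrak g/\mathrm{Ad}(w)\mathfrak p_\Theta$ is spanned by the roots $\beta\notin R_{\Theta_0}$ with $\langle\beta,\lambda\rangle>0$ and $w^{-1}\beta\notin R^+\cup R_\Theta$, up to a sign convention. By choosing $\lambda$ dominant for $P_0$, this becomes the set of positive roots $\beta$ with $w^{-1}\beta<0$ and $\beta\notin R_{\Theta_0}$. Minimality of $w$ on the left ensures no root of $R_{\Theta_0}^+$ is inverted, and minimality on the right ensures $w^{-1}\beta$ is not in $-R_\Theta$ for these $\beta$ unless it is compensated. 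The count is then $|\{\beta>0: w^{-1}\beta<0\}|=\ell(w^{-1})=\ell(w)$.

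I expect Step 4, together with the precise matching of conventions in Step 2, to be the main obstacle. It requires care about which of $\lambda$ or $\lambda^{-1}$ to use, and about whether the attracting or repelling cells give the dimension rather than the codimension. I would fix the conventions by checking $X$ equal to the Borel variety of $SL_2$ with $G$ split, where the twists must be $0$ and $1$.
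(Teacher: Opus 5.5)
Your proposal is correct and is essentially Brosnan's own proof of his Theorem 7.4, which the paper simply cites without reproving. The ingredients match his: a cocharacter of the maximal split torus that is dominant for the minimal parabolic $P_0$, fixed components $Z_w$ indexed by $W_{\Theta_0}\backslash W/W_{\Theta}$, the Hesselink--Iversen form of the Bialynicki-Birula decomposition combined with Brosnan's splitting theorem, and fibre rank $\ell(w)$ coming from bi-minimality.

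Two slips need fixing. In Step 2, $Z_w$ has no $F$-point unless $\Theta_w=\Theta_0$, because $G_{an}$ is anisotropic; argue instead that $Z_w$ is an $F$-defined, $L$-stable component which is $L$-homogeneous over $F_{sep}$ with stabilizers of type $\Theta_w$, hence is $X_{\Theta_w,G_{an}}$. In Step 4 no ``compensation'' is needed: right-minimality gives $w(R_\Theta^+)\subset R^+$, so $w^{-1}\beta\in -R_\Theta^+$ would force $\beta<0$.
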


\begin{remark}
    In the full version of his result, Brosnan allows to replace $\Theta_0$ by an arbitrary subset $I\subset \Delta_G$ containing $\Theta_0$. We stick in this note to the case where $I=\Theta_0$ for input simplicity and since it's enough to treat the general case (it leads to the finest motivic decompositions).
\end{remark}

\subsubsection{The case of split $G$}\label{split}

Even though Bialynicki-Birula-Brosnan motivic decompositions handle split semi-simple algebraic groups, complete motivic decompositions were already known in this case, by \cite{kock}. Let $G$ be a split semi-simple algebraic group and $X_{\Theta,G}$ be a projective $G$-homogeneous variety. As the anisotropic kernel of $G$ is here trivial, $W_{\Theta_0}$ is trivial as well so that Theorem \ref{main} gives 
$$M(X_{\Theta,G})\simeq \bigoplus_{w\in W^{\Theta}}\mathbb{Z}\{\ell(w)\},$$where $W^{\Theta}$ denotes the set of minimal representatives for the right cosets $W/W_{\Theta}$. Consider now the three polynomials $$P_{W}(t)=\sum_{w\in W}t^{\ell(w)},~P_{W^{\Theta}}(t)=\sum_{w\in W^{\Theta}}t^{\ell(w)}~\mbox{ and }~P_{W_{\Theta}}(t)=\sum_{w\in W_{\Theta}}t^{\ell(w)}.$$

The coefficients of $P_{W^{\Theta}}$ determine the above motivic decomposition of $X_{\Theta,G}$. Since an element $w\in W$ uniquely decomposes as a product $w=u\cdot v$, with $u\in W^{\Theta}$ and $v\in W_{\Theta}$, we also get $$P_{W^{\Theta}}(t)=\frac{P_{W}(t)}{P_{W_{\Theta}}(t)}.$$
Now by the Chevalley-Solomon formula, $P_{W}$ and $P_{W_{\Theta}}$ are completely determined by the underlying combinatorial data (see \cite{carter}) and are available in SageMath via  \texttt{W.degrees()}. Our algorithm thus forks and bypasses Weyl group enumeration in this situation for a much faster division of polynomials with integral coefficients. 

\section{The Algorithm}
\label{sec:algorithm}

\subsection{Overview}

The algorithm takes as input:
\begin{itemize}
  \item a Dynkin type (e.g.\ $E_6$, $B_4$),
  \item a subset $\Theta_0 \subseteq \{1,\dots,r\}$ (Tits index),
  \item a subset $\Theta \subseteq \{1,\dots,r\}$ (projective homogeneous variety type),
\end{itemize}
and produces as output the full list of terms in the decomposition of Theorem~\ref{main},
aggregated by motivic isomorphism class.

The algorithm proceeds in three main stages:

\begin{enumerate}
  \item \textbf{Enumeration of $W^{\Theta}$}: compute the set of right-minimal representatives.
  \item \textbf{Filtering for $W_{\Theta_0}\textbackslash W/W_{\Theta}$}: retain those that are also left-minimal for $W_{\Theta_0}$.
  \item \textbf{Aggregation}: for each $w \in W_{\Theta_0}\textbackslash W/W_{\Theta}$, compute $\Theta_w$, $\ell(w)$, and aggregate by motivic contribution.
\end{enumerate}

\subsection{Enumeration of right-minimal representatives}

The set $W^{\Theta}$ of right-minimal representatives is computed by a depth-first enumeration (DFS) of the Cayley graph of $W$, starting from $\mathrm{id}$. Recall that the Cayley graph has the elements of $W$ as vertices, with an edge between $w$ and $s_i \cdot w$ for each simple generator $s_i$. Since $\ell(w s_j) = \ell(w) \pm 1$ for any simple reflection $s_j$ (length changes by $\pm 1$ at each step), the Cayley graph is graded by length, and edges can only go up or down by one level.

The enumeration maintains a stack \texttt{q} of elements to process and a set \texttt{seen} of already-discovered elements, both initialized to $\{\mathrm{id}\}$. At each step, an element $w$ is popped from the stack, and for each simple generator $s_i$, the product $s_i \cdot w$ is immediately reduced to its right-minimal representative via \textsc{ReduceRight}. If this representative has not been seen before, it is added to both \texttt{seen} and the stack.

\begin{algorithm_env}[Enumeration of $W^{\Theta}$]\quad
\label{alg:enum}
\begin{lstlisting}[style=pseudocode, mathescape=true]
Input:  Weyl group W, generators s_1,...,s_r, parabolic subset $\Theta$
Output: list of all right-minimal representatives $W^{\Theta}$

function ReduceRight(w, $\Theta$):
    repeat
        for each j in $\Theta$:
            if length(w * s_j) < length(w):
                w := w * s_j
    until no change
    return w

queue := [identity]
seen  := {identity}
while queue is not empty:
    w := pop(queue)
    for each i in {1,...,r}:
        w' := ReduceRight(s_i * w, $\Theta$)
        if w' not in seen:
            add w' to seen and to queue
return seen
\end{lstlisting}
\end{algorithm_env}

The termination of \textsc{ReduceRight} is immediate: as soon as no generator $s_j$ with $j \in \Theta$ decreases the length, the element is guaranteed to be the unique minimum of its coset $w W_{\Theta}$. The outer DFS terminates when the queue empties, which happens after at most $|W^{\Theta}|$ iterations. The key point is that as soon as the queue is empty, \texttt{seen} contains all elements of $W^{\Theta}$. This follows from the uniqueness of the minimal representative in each right coset: if \textsc{ReduceRight}$(s_i \cdot w)$ is already in \texttt{seen} for all $i$ and all $w \in$ \texttt{seen}, then by induction on word length, every element of $W^{\Theta}$ is already in \texttt{seen}.

\subsection{The bi-minimality filter}

The key algorithmic improvement over a naive approach is the following observation.
Since every minimal element $w$ in a double coset $W_{\Theta_0} \backslash W / W_{\Theta}$ 
can be written as $w = a \cdot b \cdot c$ with $a \in W_{\Theta_0}$, $c \in W_{\Theta}$, 
and $\ell(w) = \ell(a) + \ell(b) + \ell(c)$, $b \cdot c$ is 
right-minimal in $b W_{\Theta}$, and $a \cdot b$ is left-minimal in $W_{\Theta_0} b$. It therefore suffices to filter $W^{\Theta}$ for the additional left-minimality condition, rather than performing a full iterative bi-minimal reduction on arbitrary elements of $W$.

The algorithm offers two different implementations of this test, relying on the equivalent conditions of Proposition \ref{test}, selectable at runtime. Neither implementation dominates the other in general: either can give better performance depending on the group type and $\Theta_0$.

\paragraph{Length-based test (a).}
For each $i \in \Theta_0$, compute $\ell(s_i w)$ and compare with $\ell(w)$. This is
straightforward and relies only on the length function of $W$.

\paragraph{Root-based test (b).}
The equivalent condition is: $w^{-1}(\alpha_i) > 0$ for all $i \in \Theta_0$. This avoids computing the length of the $s_i w$.

\subsection{Computation of $\Theta_w$ and aggregation}

For each minimal representative $w$ of a double coset in $W_{\Theta_0} \backslash W / W_{\Theta}$, the subset $\Theta_w \subseteq \Theta_0$ is determined by the condition $w^{-1}(\alpha_i) \in R_{\Theta}$. The reasoning for the split case (Section \ref{split}) implies that the rank of $w$, defined as
\[
  \mathrm{abs\_rank}(w) = \frac{|W_{\Theta_0}|}{|W_{\Theta_w}|},
\]
determines the absolute rank of the motive of $X_{\Theta_w,G}$, that is, the number of Tate motives in the complete motivic decomposition of $M(X_{\Theta_w,G})$ over a separable closure. It thus measures the size of the motivic summand contributed by $w$, and serves as a final consistency check.

Contributions are aggregated by triples $(\Theta_w, \ell(w), \mathrm{rank}(w))$: all minimal representatives sharing the same triple are grouped, and their count gives the multiplicity.

\subsection{Consistency checks}
Two consistency checks are built into the algorithm:
\begin{enumerate}
    \item The two implementations of the left-minimality test (length-based 
    and root-based) are mathematically equivalent and can be run in parallel 
    to cross-validate the output.
    \item The sum of the abstract ranks of all motivic summands in the output 
    decomposition, denoted \texttt{covered\_tate}, must equal \texttt{target\_tate} 
    $= |W|/|W_{\Theta}| = P_W(1)/P_{W_{\Theta}}(1)$, which is the absolute rank of the motive 
    $M(X_{\Theta, G})$. This quantity is computed directly as a ratio of two integers, without requiring the full polynomial quotient of \S\ref{split}.
\end{enumerate}

\subsection{Tate trace mode}

Let $G$ be a semi-simple algebraic group over $F$ and $X$ be a projective homogeneous $G$-variety. Following \cite{decqm}, the integral Tate trace $\mathrm{Tr}(X)$ of $X$ is defined as a maximal pure Tate summand of $M(X)\in \mathrm{CM}(F,\mathbb{Z})$. The $p$-local versions of these Tate traces allow to completely classify motives arising from projective homogeneous varieties with finite coefficients \cite[Theorem 4.3]{decqm}. Under some assumptions we now explain, the Bialynicki-Birula-Brosnan motivic decompositions (Theorem \ref{main}) determine the integral Tate traces.

In this subsection we assume $G$ satisfies the following property: any projective $G$-homogeneous variety has a rational point if and only if it has a $0$-cycle of degree $1$. This condition is known to hold in many cases, including spinor and orthogonal groups of quadratic forms, special and projective linear groups of central simple algebras. It holds in general over number fields, while the question for arbitrary fields remains open (we refer the reader to \cite{pari} for more details).

Consider now a motivic decomposition arising from Theorem \ref{main} :
$$M(X_{\Theta,G})\simeq \bigoplus_{w\in \mathcal{W}}M(X_{\Theta_w,G_{an}})\{\ell(w)\}.$$
Our choice for this statement of fixing $I=\Sigma_0$ in \cite[Theorem 7.4]{bros} implies that this decomposition involves projective homogeneous varieties for the anisotropic kernel of $G$. In particular, the integral Tate trace of $X$ can be recovered directly from this decomposition, thanks to the following well-known integral version of \cite[Lemma 2.2]{decqm}.

\begin{lemma}
Let $X$ be a projective homogeneous variety. The following are equivalent :
\begin{enumerate}
    \item[(i)] $X$ has a zero-cycle of degree $1$;
    \item[(ii)] $M(X)\in \mathrm{CM}(F,\mathbb{Z})$ contains a summand isomorphic to $\mathbb{Z}\{0\}$;
    \item[(iii)] $M(X)\in \mathrm{CM}(F,\mathbb{Z})$ contains a summand isomorphic to $\mathbb{Z}\{k\}$, for some $k\in \mathbb{Z}$.
\end{enumerate}
\end{lemma}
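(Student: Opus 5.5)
The plan is to prove the cycle (i)$\Rightarrow$(ii)$\Rightarrow$(iii)$\Rightarrow$(i), working throughout with the identification of morphisms of Chow motives with Chow groups of products. For a smooth projective variety $X$ of dimension $d$ we have $\mathrm{Hom}(\mathbb{Z}\{k\},M(X))=\mathrm{CH}_k(X)$ and $\mathrm{Hom}(M(X),\mathbb{Z}\{k\})=\mathrm{CH}^k(X)$ (see \cite{ekm}). Composition $\mathbb{Z}\{k\}\to M(X)\to\mathbb{Z}\{k\}$ is given by the degree of the intersection product $\mathrm{CH}_k(X)\times\mathrm{CH}^k(X)\to\mathrm{CH}_0(X)\xrightarrow{\deg}\mathbb{Z}$. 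A summand $\mathbb{Z}\{k\}$ of $M(X)$ is therefore exactly a pair $(a,b)\in \mathrm{CH}_k(X)\times \mathrm{CH}^k(X)$ with $\deg(a\cdot b)=1$.

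For (i)$\Rightarrow$(ii), let $z\in\mathrm{CH}_0(X)$ have degree $1$ and take $b=[X]\in \mathrm{CH}^0(X)$. Then $\deg(z\cdot[X])=\deg z=1$, so $\mathbb{Z}\{0\}$ splits off, with projector $[X]\times z$. The implication (ii)$\Rightarrow$(iii) is trivial.

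The main step is (iii)$\Rightarrow$(i). Given $a\in\mathrm{CH}_k(X)$ and $b\in\mathrm{CH}^k(X)$ with $\deg(a\cdot b)=1$, the product $a\cdot b$ lies in $\mathrm{CH}_0(X)$ and has degree $1$. This already gives a zero-cycle of degree $1$ on $X$, so the only thing to check is that the composite is computed by this intersection degree.

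I would verify that point by unwinding composition of correspondences. The morphism $\mathbb{Z}\{k\}\to M(X)$ is a class in $\mathrm{CH}_{k}(\mathrm{Spec} F\times X)=\mathrm{CH}_k(X)$, and $M(X)\to\mathbb{Z}\{k\}$ is a class in $\mathrm{CH}^{k}(X)$. Their composite in $\mathrm{End}(\mathbb{Z}\{k\})=\mathrm{CH}^0(\mathrm{Spec} F)=\mathbb{Z}$ is the pushforward to the point of $a\cdot b$, which is $\deg(a\cdot b)$. I would also check the twist conventions for $\mathbb{Z}\{k\}$ in \cite{ekm}, so that degrees match and $a\cdot b$ really lands in dimension $0$. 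Up to that bookkeeping, this is the only delicate point. The argument does not use homogeneity of $X$; homogeneity only matters in the surrounding discussion, where degree-one zero-cycles are related to rational points.
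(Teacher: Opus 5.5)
Your proof is correct and follows the paper's argument. For (iii)$\Rightarrow$(i) you read the composite $\mathbb{Z}\{k\}\to M(X)\to\mathbb{Z}\{k\}$ as the degree of the intersection product $a\cdot b\in\mathrm{CH}_0(X)$, which is exactly the paper's ``intersection of the pull-backs to $\mathrm{Spec}(F)\times X\times\mathrm{Spec}(F)$''; you also write out the projector $[X]\times z$ for (i)$\Rightarrow$(ii), which the paper only calls well-known.
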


\begin{proof}
$(i)\Rightarrow (ii)$ is well-known and $(ii)\Rightarrow (iii)$ is obvious. As for $(iii)\Rightarrow (i)$, a direct summand $\mathbb{Z}\{k\}$ of $M(X)$ yields two correspondences $\alpha:\mathbb{Z}\{k\}\rightsquigarrow X$ and $\beta:X \rightsquigarrow \mathbb{Z}\{k\}$ defining the summand. From the definition of the composition of correspondences, the intersection of their pull-backs to $\mathrm{CH}(\mathrm{Spec}(F)\times X \times \mathrm{Spec}(F))$ is a zero-cycle of degree $1$.
\end{proof}
 
The previous lemma asserts that under our assumptions, a direct summand $M(X_{\Theta_w,G_{an}})\{\ell(w)\}$ involved in the above motivic decomposition has non-trivial Tate trace if and only if the variety $X_{\Theta_w,G_{an}}$ has a rational point. It follows that it can only happen here if $\Theta_w=\Theta_0$ and then,  $M(X_{\Theta_w,G_{an}})\{\ell(w)\}$ is simply the Tate motive $\mathbb{Z}\{\ell(w)\}$.

The Tate trace mode hence performs the main algorithm for Bialynicki-Birula-Brosnan motivic decompositions, retaining only summands associated with minimal representatives $w$ for double cosets of $W_{\Theta_0} \backslash W / W_{\Theta}$ such that $\Theta_w = \Theta_0$. Rather than computing $\Theta_w$ explicitly for each $w$ and then checking 
whether $\Theta_w = \Theta_0$, the algorithm tests this condition directly 
by checking $w^{-1}(\alpha_i) \in R_{\Theta}$ for each $i \in \Theta_0$ 
in turn, stopping at the first failure. Elements passing this test are then 
aggregated by Tate twist $\ell(w)$, yielding the multiplicity of the pure Tate summand $\mathbb{Z}\{\ell(w)\}$ in the Tate trace of $M(X_{\Theta,G})$.

\section{Complexity Analysis}

\subsection{Enumeration of $W^{\Theta}$}
The DFS enumeration of $W^{\Theta}$ via the Cayley graph of $W$ finds each element of 
$W^{\Theta}$ exactly once, via the test \texttt{w' not in seen}. The cost per 
element is dominated by the call to \textsc{ReduceRight}, which performs at most 
$|\Theta| \cdot h$ length comparisons, where $h$ is the Coxeter number of $W$. 
Since $|W^{\Theta}| = |W|/|W_{\Theta}|$, the total time cost of the enumeration is 
$O(|W| \cdot r \cdot |\Theta| \cdot h / |W_{\Theta}|)$, where $r$ is the rank of $W$. 
The set \texttt{seen} must store all discovered elements simultaneously, which 
can become significant for large groups. The optional \texttt{--words} flag 
controls whether sample reduced words are retained in the output; disabling 
it (default run) reduces the memory footprint.

\subsection{Filtering for bi-minimality}
The left-minimality filter applies one of the two equivalent tests of 
Proposition~\ref{test} to each element of $W^{\Theta}$, performing at most 
$|\Theta_0|$ operations per element, with a short-circuit at the first failure. 
The total filtering cost is dominated by the enumeration cost of the previous step.

\subsection{Comparison with naive bi-minimal reduction}
The naive approach of performing bi-minimal reduction on all elements of $W$ 
costs $O(|W| \cdot r \cdot h)$ up to factors depending on $|\Theta|$ and 
$|\Theta_0|$, which is a factor of $|W_{\Theta}|$ worse than the present 
algorithm. The improvement is significant: for example, if $G$ has type $E_8$ 
and $\Theta = \{1,2,3,4,5,6,7\}$, then $|W_{\Theta}| = |W(E_7)| = 2\,903\,040$, 
reducing the enumeration cost by a factor of nearly three million. More generally, the complexity of the present algorithm is naturally 
calibrated to the size of the motive investigated itself, as
$|W^{\Theta}| = |W|/|W_{\Theta}|$ is precisely the absolute rank of 
$M(X_{\Theta,G})$.

\section{Implementation in SageMath}

\subsection{Software environment}

The algorithm is implemented as a SageMath version 9.5 script (\texttt{BBBdec.py}). SageMath provides the required algebraic primitives: Cartan types (\texttt{CartanType}), root systems and root lattices (\texttt{RootSystem}, \texttt{root\_lattice()}), Weyl groups and their elements
(\texttt{WeylGroup}, \texttt{simple\_reflection}, \texttt{length}, \texttt{reduced\_word}),
and polynomial arithmetic (\texttt{PolynomialRing}, \texttt{quo\_rem}). Weyl group elements are represented by their reduced words. SageMath's built-in \texttt{length()} and \texttt{w.action(alpha)} methods compute the length of a Weyl group element and their actions on a root $\alpha$.

\subsection{Availability}
The SageMath script \texttt{BBBdec.py} is available on the following webpage, displaying several examples: \url{https://www.math.univ-paris13.fr/~declercq/BBBdec.html}

\subsection{Command-line interface}

The script exposes a command-line interface with the following main parameters:

\begin{center}
\begin{tabular}{>{\ttfamily}ll}
\toprule
\normalfont Flag & Meaning \\
\midrule
--dynkin TYPE    & Dynkin type of $G$ (e.g.\ \texttt{A4}, \texttt{D5}, \texttt{E6}) \\
--Sigma0 LIST    & Comma-separated vertices of $\Theta_0$ (anisotropic kernel)\\
--J LIST         & Comma-separated vertices of $\Theta$ (type of the variety)\\
--Sigma0comp LIST & Complement of \texttt{Sigma0} (alternative input) \\
--Jcomp LIST     & Complement of \texttt{J} (alternative input) \\
--tatetrace      & Compute Tate trace only\\
--use-root-test  & Use root-based left-minimality test \\
--progress       & Print progress during enumeration \\
\bottomrule
\end{tabular}
\end{center}

\section{Explicit examples}

From now on to simplify notation, we switch to the opposite of Tits convention for the types of projective homogeneous varieties  and $\Theta_0$ (see Remark \ref{convent}). With this new convention, for instance, the Borel variety of a semi-simple group $G$ is of type $\Delta_G$, or the projective quadric associated with a quadratic form is of type $1$ (and not of type $\Delta_{SO(q)}\setminus \{1\}$). Note that the output of \texttt{BBBdec.py} uses this convention, and we input with this convention as well using \texttt{Jcomp}.\\

\begin{example}[{\cite[Example 7.6]{bros}}]Let $q$ be a 12-dimensional non-degenerate quadratic form over $F$ of Witt index $1$ with anisotropic part $q_{an}$ (so that $\Delta_{SO(q_{an})}=\Theta_0=\Delta_{SO(q)}\setminus \{1\}$). The orthogonal Grassmannian variety $X(2;q)$ of isotropic planes with respect to $q$ is projective $SO(q)$-homogeneous of type $\{2\}$. Running \texttt{BBBdec.py} with \texttt{--words} displays the set of minimal representatives for double cosets. To avoid enumerating the whole $\Theta_0$ we input its complement \texttt{Sigma0comp} and we input the subset $\{2\}$ with \texttt{Jcomp}.\\

\begin{lstlisting}[style=pseudocode,numbers=none,basicstyle=\ttfamily\footnotesize]
sage BBBdec.py \
--dynkin=D6 --Sigma0comp=1 --Jcomp=2 --words

=== Bialynicki-Birula-Brosnan decomposition (Brosnan Thm. 7.4) ===
dynkin=['D', 6], Sigma0=[2, 3, 4, 5, 6], J=[1, 3, 4, 5, 6]

=== AGGREGATED TERMS ===
mult=1| Levi_type=D5 | Jwcomp=[2] | twist=0 |sample_w=1
mult=1| Levi_type=D5 | Jwcomp=[3] | twist=2 |sample_w=s1*s2
mult=1| Levi_type=D5 | Jwcomp=[2] | twist=9 |sample_w=s1*s2*s3*s4*s6*s5*s4*s3*s2

=== STATS ===
covered_tate: 60
target_tate: 60
\end{lstlisting}

We thus get the motivic decomposition
$$M(X(2;q)) \simeq M(X)\{0\} \oplus M(Y)\{2\} \oplus M(X)\{9\}$$
where $X$ and $Y$ are as follows : as the vertex labelled $[2]$ in $\Delta_{SO(q)}$ corresponds to vertex $1$ of $\Delta_{SO(q_{an})}$, $X=X_{1,SO(q_{an})}$ is the projective quadric associated with the anisotropic part of $q$. In the same way, the variety $Y$ corresponds to the vertex labelled $[3]$ in $\Delta_{SO(q)}$, hence $Y=X_{2,SO(q_{an})}$ is the variety of isotropic planes $X(2;q_{an})$ with respect to $q_{an}$. These motivic summands are respectively associated with the Weyl group elements $\mathrm{id}$, $s_1s_2$ and $s_1s_2s_3s_4s_6s_5s_4s_3s_2$.\\
\end{example}

\begin{example}[Tate trace mode]Let $G$ be a semi-simple group of type $E_6$, with $\Theta_0=\Delta_G\setminus \{1,6\}$.\vspace{0.1cm}

\noindent
\begin{minipage}{.3\textwidth}%
\hspace{1cm}
\scalebox{1}{
\begin{picture}(7,2)(0,0)

%ligne du dessus

\put(0,0){\circle*{3}}
\put(0,0){\line(1,0){20}}
\put(20,0){\circle*{3}}
 \put(20,0){\line(1,0){20}}
% \put(25,-2.5){\mbox{$\cdots$}}

\put(40,0){\circle*{3}}
\put(40,0){\line(1,0){20}}
\put(60,0){\circle*{3}}
\put(60,0){\line(1,0){20}}
\put(80,0){\circle*{3}}
\put(40,0){\line(0,1){20}}
\put(40,20){\circle*{3}}

\put(0,0){\circle{7}}
\put(80,0){\circle{7}}
\end{picture}
}
\end{minipage}%
\hfill
\begin{minipage}{.65\textwidth}%
The anisotropic kernel $G_{an}$ of $G$ is of type $D_4$, and the variety $X_{\{1,6\},G}$ associated with the subset $\Theta=\{1,6\}$ of $\Delta_G$ has a non-trivial Tate trace,  since $\Theta$ is circled in the Tits index of $G$.
\vspace{0.3cm}
\end{minipage}%
\begin{lstlisting}[style=pseudocode,numbers=none,basicstyle=\ttfamily\footnotesize]
sage BBBdec.py \
--dynkin=E6 --Sigma0comp=1,6 --Jcomp=1,6 --tatetrace

=== TATE TRACE ===
  mult=1 | twist=0  | contrib=1
  mult=2 | twist=8  | contrib=2
  mult=2 | twist=16 | contrib=2
  mult=1 | twist=24 | contrib=1
\end{lstlisting}
Hence $\mathrm{Tr}(X_{\{1,6\},G})= \mathbb{Z}\{0\}\oplus\mathbb{Z}\{8\}^{\oplus 2}\oplus\mathbb{Z}\{16\}^{\oplus 2}\oplus\mathbb{Z}\{24\}$\\
\end{example}

\begin{example}[Type $E_8$]Let $G$ be a semi-simple group of type $E_8$, with $\Theta_0=\Delta_G\setminus \{1\}$. \vspace{0.1cm}

\noindent
\begin{minipage}{.3\textwidth}%
\hspace{0.3cm}
\scalebox{1}{
\begin{picture}(7,2)(0,0)

%ligne du dessus

\put(0,0){\circle*{3}}
\put(0,0){\line(1,0){20}}
\put(20,0){\circle*{3}}
 \put(20,0){\line(1,0){20}}
% \put(25,-2.5){\mbox{$\cdots$}}

\put(40,0){\circle*{3}}
\put(40,0){\line(1,0){20}}
\put(60,0){\circle*{3}}
\put(60,0){\line(1,0){20}}
\put(80,0){\circle*{3}}
\put(80,0){\line(1,0){20}}
\put(100,0){\circle*{3}}
\put(100,0){\line(1,0){20}}
\put(120,0){\circle*{3}}
\put(80,0){\line(0,1){20}}
 \put(80,20){\circle*{3}}

\put(120,0){\circle{7}}
\end{picture}
}
\end{minipage}%
\hfill
\begin{minipage}{.65\textwidth}%
The anisotropic kernel $G_{an}$ of $G$ is of type $D_7$. Consider the variety $X_{8,G}$ associated with the 8th vertex of $\Delta_G$. Note that as $|W(G)|=696729600$, a brute-force bi-minimal reduction to compute the decomposition of $M(X_{8,G})$ is not reasonable.
\vspace{0.3cm}
\end{minipage}%
\begin{lstlisting}[style=pseudocode,numbers=none,basicstyle=\ttfamily\footnotesize]
sage BBBdec.py \
--dynkin=E8 --Sigma0comp=1 --Jcomp=8 --use-root-test

=== Bialynicki-Birula-Brosnan decomposition (Brosnan Thm. 7.4) ===
dynkin=['E', 8], Sigma0=[2, 3, 4, 5, 6, 7, 8], J=[1, 2, 3, 4, 5, 6, 7]

=== AGGREGATED TERMS ===
  mult=1| Levi_type=D7 | Jwcomp=[8] | twist=0  | abs_rank=14 | contrib=14
  mult=1| Levi_type=D7 | Jwcomp=[2] | twist=7  | abs_rank=64 | contrib=64
  mult=1| Levi_type=D7 | Jwcomp=[7] | twist=18 | abs_rank=84 | contrib=84
  mult=1| Levi_type=D7 | Jwcomp=[3] | twist=29 | abs_rank=64 | contrib=64
  mult=1| Levi_type=D7 | Jwcomp=[8] | twist=45 | abs_rank=14 | contrib=14

=== STATS ===
total_W: 696729600
WI_order: 322560
WJ_order: 2903040
WJ_reps: 240
bimin_reps: 5
aggregated_classes: 5
covered_tate: 240
target_tate: 240
\end{lstlisting}
This computation takes approximately 250 seconds on a standard computer. 
Translating \texttt{Jwcomp} indices to vertices of $\Delta_{G_{an}}$, 
we get: $$M(X_{8,G})\simeq M(X_{1,G_{an}})\{0\}\oplus M(X_{6,G_{an}})\{7\} \oplus M(X_{2,G_{an}})\{18\} \oplus M(X_{7,G_{an}})\{29\}\oplus M(X_{1,G_{an}})\{45\}$$
\end{example}

\section{Application: motivic decompositions in characteristic $2$.}

In characteristic different from $2$, \cite[Corollary 15.14]{kar} describes motivic decompositions of projective homogeneous varieties for semi-simple algebraic groups of classical type, with respect to the underlying algebraic structures (namely, central-simple algebras with involution). Our algorithm can be used to help showing that Bialynicki-Birula-Brosnan decompositions extend these decompositions to arbitrary characteristic (replacing algebras with orthogonal involutions by quadratic pairs for type $D_n$).\\

\noindent \textbf{Notation (see \cite{kmrt}).} Let $F$ be a field and $(A,\sigma)$ be a central-simple $F$-algebra with symplectic involution. Let $D$ a division algebra Brauer-equivalent to $A$, $V$ a right $D$-module with $h:V\times V\longrightarrow D$ the hermitian form adjoint to $(A,\sigma)$. Denote by $X(j;(A,\sigma))$ the variety of $\sigma$-isotropic ideals in $A$ of reduced dimension $j$. We say that $(A,\sigma)$ is isotropic if it contains a non-trivial isotropic ideal. In this case, there is an orthogonal decomposition $(V,h)=\mathbb{H}(D)~\bot ~(W,h')$ and we denote by $(B,\tau)$ an algebra with symplectic involution adjoint to $(W,h')$. Following the usual notations, we denote by $e_1,..,e_n$ the canonical basis of the underlying vector space of our root system of type $C_n$, and set $\alpha_i=e_i-e_{i+1}$ ($1\leq i\leq n-1$), $\alpha_n=2e_n$ for the simple roots.\\

The next result for semi-simple groups of type $C_n$ in characteristic $2$ is a key tool of \cite{decqmdb}.

\begin{theorem}
    Let $F$ be a field of characteristic $2$ and $(A,\sigma)$ be a central-simple $F$-algebra with symplectic involution of degree $2n$, $n\geq 4$. Assume that $(A,\sigma)$ is isotropic and $A$ is Brauer-equivalent to a quaternion algebra $D$. The following decomposition holds :
    \begin{align*}M(X(2;(A,\sigma)))=\mathbb{Z}\{0\}\oplus M(\mathrm{SB}(D)\!\times \! X(1;(B,\tau))\{1\}\oplus M(\mathrm{SB}(D))\{{ 2n-3 }\}\\\oplus M(X(2;(B,\tau)))\{4\}
\oplus M(\mathrm{SB}(D)\!\times \!X(1;(B,\tau)))\{2n-2\}\oplus\mathbb{Z}\{4n-5\}.
\end{align*}
where $\mathrm{SB}(D)$ denotes the Severi-Brauer variety of $D$.
\end{theorem}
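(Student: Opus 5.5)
The plan is to apply Brosnan's theorem (Theorem \ref{main}) in its general form, mentioned in the remark following it, where $\Theta_0$ is replaced by a subset $I\supset\Theta_0$. Brosnan's argument rests on the Bialynicki-Birula decomposition for a $\mathbb{G}_m$-action on a smooth projective variety, and on the identification of fixed-point components as homogeneous varieties for a Levi subgroup. Both are characteristic-free, so the theorem holds over our field of characteristic $2$.

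\textbf{Setup.} Let $G=PSp(A,\sigma)$, of type $C_n$. Since $(A,\sigma)$ is isotropic and $\operatorname{ind}A=2$, the hermitian form $h$ over $D$ contains a hyperbolic plane $\mathbb{H}(D)$. This gives a parabolic subgroup defined over $F$, namely the stabilizer of an isotropic ideal of reduced dimension $2$. In the opposite convention its type is $\{2\}$, so in Tits convention it corresponds to $I=\Delta_G\setminus\{2\}$. Thus vertex $2$ is circled and $I\supset\Theta_0$.

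The semisimple part of the Levi subgroup has Dynkin diagram $I=\{1\}\sqcup\{3,\dots,n\}$, of type $A_1\times C_{n-2}$. I will identify it with $SL_1(D)\times Sp(B,\tau)$, coming from the decomposition $(V,h)=\mathbb{H}(D)\perp(W,h')$. This identification of Levi subgroups with algebraic structures uses only KMRT-style descriptions of symplectic involutions, which are valid in characteristic $2$.

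The variety $X(2;(A,\sigma))$ has type $J=\Delta_G\setminus\{2\}$ in Tits convention. Theorem \ref{main}, with $I$ in place of $\Theta_0$, gives
$$M(X(2;(A,\sigma)))\simeq\bigoplus_{w\in\mathcal W}M(X_{\Theta_w,L})\{\ell(w)\},$$
where $\mathcal W$ is the set of minimal representatives of $W_I\backslash W(C_n)/W_I$.

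\textbf{Combinatorial step.} This step can be done by hand or checked with \texttt{BBBdec.py} for small $n\geq 4$ and then proven in general. Using the signed-permutation model with $\alpha_i=e_i-e_{i+1}$ and $\alpha_n=2e_n$, $W_I\backslash W/W_I$ is the set of relative positions of two isotropic $2$-planes. These are classified by the dimension of their intersection and the rank of the pairing between them. I expect exactly six bi-minimal elements, of lengths $0,1,4,2n-3,2n-2,4n-5$.

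For each such $w$ I will compute $\Theta_w=\{\alpha\in I: w^{-1}\alpha\in R_J\}$ via the root criterion of Proposition \ref{test}, then translate it back to the opposite convention:
\begin{itemize}
\item For $\ell=0$ and $\ell=4n-5$, $\Theta_w=I$, giving Tate motives.
\item For $\ell=1$ and $\ell=2n-2$, the complement is vertex $1$ of the $A_1$ factor together with vertex $1$ of $C_{n-2}$. This gives $\mathrm{SB}(D)\times X(1;(B,\tau))$; a homogeneous variety for a product group is the product of the factors' varieties.
\item For $\ell=2n-3$, the complement is only the $A_1$ vertex, giving $\mathrm{SB}(D)$.
\item For $\ell=4$, the complement is vertex $2$ of $C_{n-2}$, giving $X(2;(B,\tau))$.
\end{itemize}
In each case I will identify the varieties for $SL_1(D)$ and $Sp(B,\tau)$ with the named Severi-Brauer and isotropic-ideal varieties.

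\textbf{Consistency check.} The absolute ranks should satisfy
$$1+2(2n-4)+2+2(n-2)(n-3)+2(2n-4)+1=2n(n-1)=\frac{|W(C_n)|}{|W(A_1)|\,|W(C_{n-2})|}.$$
This identity does hold, which supports the predicted list of six terms.

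\textbf{Main obstacle.} I expect the hardest part to be the uniform-in-$n$ description of the six double coset representatives and their $\Theta_w$; the computer only verifies individual $n$. My first attempt will be to write explicit reduced words, for example $s_2$ for length $1$ and $s_2s_1s_3s_2$ for length $4$. I will then check positivity of $w(\alpha)$ for $\alpha\in J$ and of $w^{-1}(\alpha)$ for $\alpha\in I$ directly on signed permutations. A secondary point is to justify carefully that the $F$-forms of the Levi factors are $SL_1(D)$ and $Sp(B,\tau)$ in characteristic $2$.
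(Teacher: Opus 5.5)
Your plan is correct and is essentially the paper's proof. Both apply Brosnan's theorem with $I=J=\Delta_G\setminus\{2\}$, check minimality and compute $\Theta_w$ by the root criterion of Proposition \ref{test}, and identify the Levi with $SL_1(D)\times Sp(B,\tau)$; your six lengths, the $\Theta_w$'s and the rank identity are all right, and $s_2s_1s_3s_2=s_2s_3s_1s_2$ is the paper's length-$4$ element.

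The only difference is how the last two summands are obtained. The paper exhibits just four representatives and gets $M(\mathrm{SB}(D)\times X(1;(B,\tau)))\{2n-2\}$ and $\mathbb{Z}\{4n-5\}$ by transposing projectors, then concludes with a rank count. You instead enumerate all six double cosets directly, as the paper's closing remark sketches (e.g.\ $s_2s_3\cdots s_{n-1}s_ns_{n-1}\cdots s_4s_3s_1s_2$ for twist $2n-2$). Your route lets Brosnan's theorem deliver the whole decomposition at once, with no orthogonality check needed for the transposed projectors.
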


\begin{proof}We are looking for elements in the set of minimal length representatives for the double-cosets $W_{\{1,3,...n\}}\! \setminus W / W_{\{1,3,...,n\}}$, where $W$ is the Weyl group of the automorphism group of $(A,\sigma)$ (of type $C_n$, $n\geq 4$). We show that elements $e$, $s_2$, $s_2s_3s_1s_2$ and $s_2s_3 ...s_{n-1}s_ns_{n-1}...s_3s_2$ (extrapolated from the algorithm) provide enough summands to get the complete motivic decomposition.

\begin{enumerate}
    \item $e$ is obviously of minimal length in its double coset and gives rise to a Tate summand $\mathbb{Z}\{0\}$ since we have $\Theta_{e}=\{\alpha_1,\alpha_3,...,\alpha_n\}$.

    \item $s_2$ is of minimal length, otherwise we would have $e\in W_{\{1,3,...n\}}\! \setminus s_2 / W_{\{1,3,...,n\}}$ and $s_2$ would belong to $W_{\{1,3,...n\}}$. A direct computation shows that $s_2(\alpha_1)=\alpha_1+\alpha_2$, $s_2(\alpha_3)=\alpha_2+\alpha_3$ and $s_2$ acts trivially on $\alpha_i$ if $i\geq 4$ so that $\Theta_{s_2}=\{\alpha_4,...,\alpha_n\}$. 

    \item We show that $w=s_2s_3s_1s_2$ is minimal with Proposition \ref{test}. First $w(\alpha_1)=w^{-1}(\alpha_1)=\alpha_3$, $w(\alpha_3)=w^{-1}(\alpha_3)=\alpha_1$ and $w(\alpha_4)=w^{-1}(\alpha_4)$ (which is $2e_2=2\alpha_2+2\alpha_3+\alpha_4$ if $n=4$ and $e_2-e_5=\alpha_2+\alpha_3+\alpha_4$ if $n> 4$) are indeed positive. Since for $i> 4$, $w$ acts trivially on $\alpha_i$, $s_2s_3s_1s_2$ is of minimal length in its double coset and does not belong to the previous ones, by unicity. These readily give $\Theta_w=\{\alpha_1,\alpha_3,\alpha_5,...,\alpha_n\}$

    \item We use Proposition \ref{test} again for $\tilde{w}=s_2s_3 ...s_{n-1}s_ns_{n-1}...s_3s_2$ (note that it is a palindrome). One easily see that $\tilde{w}(e_2)=-e_2$ and $\tilde{w}$ acts trivially on $e_i$, for $i\neq 2$. Thus we have $\tilde{w}(\alpha_1)=e_1+e_2=\alpha_1+2\sum_{2\leq k \leq n-1} \alpha_k + \alpha_n$ is positive, and for $i\geq 3$, $\tilde{w}$ acts trivially on $\alpha_i$. It follows that $\Theta_{\tilde{w}}=\{\alpha_3,\alpha_4,...,\alpha_n\}$
    \end{enumerate}

These elements respectively give rise to direct summands $\mathbb{Z}\{0\}$, $M(\mathrm{SB}(D)\!\times \! X(1;(B,\tau))\{1\}$, $M(X(2;(B,\tau)))\{4\}$ and $M(\mathrm{SB}(D))\{2n-3\}$ of the motive $M(X(2;(A,\sigma)))$. The push-forward of the exchange isomorphism on the Chow group $\mathrm{CH}_{\ast}(X(2;(A,\sigma))\times X(2;(A,\sigma)))$, applied to the projectors of the first two summands, yields by symmetry two new summands $\mathbb{Z}\{4n-5\}$ and $M(\mathrm{SB}(D)\!\times \! X(1;(B,\tau))\{2n-2\}$ (note that the Tate twists come from $\dim(X(2;(A,\sigma))=4n-5$).

The sum of the absolute ranks of these summands is equal to the rank of the whole motive $M(X(2;(A,\sigma)))$, so that this decomposition is complete (these computation is independent of the characteristic of $F$, and one may use \cite[Corollary 15.14]{kar} here as well). 

\begin{remark}
    Extrapolating from the low rank results given by the algorithm, one may as well show that the two summands $\mathbb{Z}\{4n-5\}$ and $M(\mathrm{SB}(D)\!\times \! X(1;(B,\tau))\{2n-2\}$ obtained in the above proof by symmetry are respectively given by the words $s_2s_3...s_ns_1s_2s_3...s_n(s_{n-2}s_{n-1})...(s_2s_3)(s_1s_2)$ and $s_2s_3...s_{n-1}s_ns_{n-1}...s_4s_3s_1s_2$.
\end{remark}

\end{proof}

\printbibliography

\end{document}